\theoremstyle{definition}
\newtheorem{thm}{Theorem}[section]
\newtheorem{exm}[thm]{Example}
\newtheorem{lemm}[thm]{Lemma}
\newtheorem{rem}[thm]{Remark}
\newtheorem{prop}[thm]{Proposition}
\newtheorem{ques}[thm]{Question}
\newtheoremstyle{case}
{3pt}
{3pt}
{}
{}
{\itshape}
{:}
{.5em}
{}
\theoremstyle{case}
\theoremstyle{remark}
\DeclareMathOperator{\diam}{diam}
\newcommand{\apmd}[2][]{															
	\ifthenelse{\equal{#1}{}}%
					{ \operatorname{N}_{#2}	}%
					{ \operatorname{N}_{#1}(#2) 	}}
\begin{document}

	\title{Graphs that are not minimal for conformal dimension}
	\author{Matthew Romney}
	\address{Department of Mathematical Sciences\\ Stevens Institute of Technology, Hoboken, NJ 07030}
    \email{mromney@stevens.edu}
	
	\date{}
    \thanks{The author is partially supported by the National Science Foundation under Grant No. DMS-2413156.}

    \subjclass[2020]{28A78, 30L10}
	\maketitle
	
	\begin{abstract}
	    We construct functions $f \colon [0,1] \to [0,1]$ whose graph as a subset of $\mathbb{R}^2$ has Hausdorff dimension greater than any given value $\alpha \in (1,2)$ but conformal dimension $1$. These functions have the property that a positive proportion of level sets have positive codimension-$1$ measure. This result gives a negative answer to a question of Binder--Hakobyan--Li. We also give a function whose graph has Hausdorff dimension $2$ but conformal dimension $1$. The construction is based on the author's previous solution to the inverse absolute continuity problem for quasisymmetric mappings.  
	\end{abstract}

	
	

	\section{Introduction}

    The \textit{conformal dimension} of a metric space is the infimal Hausdorff dimension of its image under all quasisymmetric mappings from that space. The notion was introduced by Pansu in \cite{Pan:89} and has become standard in analysis on metric spaces and neighboring fields. See the monograph of Mackay and Tyson for a detailed introduction to the topic \cite{MT:10}. 
    
    A problem of great interest is to recognize when the conformal dimension of a space is its Hausdorff dimension; such a space is said to be \textit{minimal for conformal dimension}. A theorem of Tyson \cite{Tys:00} states that a compact metric space is minimal for conformal dimension provided it is upper Ahlfors $\alpha$-regular for some $\alpha>1$ and contains a family of curves of positive $\alpha$-modulus (see, for example, \cite[Chapter 7-8]{Hei:01} for the relevant definitions). In particular, given any Ahlfors regular compact set $E \subset \mathbb{R}^n$, the product set $E \times [0,1] \subset \mathbb{R}^{n+1}$ is minimal for conformal dimension. Tyson's theorem was generalized by Hakobyan \cite[Theorem 5.5]{Hak:10}, and further by Binder--Hakobyan--Li \cite[Theorem 3.2]{BHL:23}. In these results, the assumption of a curve family of positive modulus is replaced by a more general condition that the space contains a family of subsets each of conformal dimension at least $1$, and that these subsets support a collection of sufficiently regular measures with positive modulus in the sense of Fuglede \cite{Fug:57}. See the respective papers for a precise statement of these results.



    In \cite{BHL:23}, Binder--Hakobyan--Li succesfully apply their criterion to prove minimality for conformal dimension for so-called \textit{Bedford--McMullen carpets with uniform fibers} and, most notably, the graph of $1$-dimensional Brownian motion (almost surely). They further observe that, by using the class of Bedford--McMullen carpets with uniform fibers, one can construct graphs of continuous functions of the unit interval $[0,1]$ into itself with arbitrary Hausdorff dimension $\alpha \in [1,2]$. Based on these positive results, Binder--Hakobyan--Li raise the question of what conditions guarantee that the graph of a continuous function $f \colon [0,1] \to \mathbb{R}$ is minimal for conformal dimension. They conjecture \cite[Conjecture 7.1]{BHL:23} that this should be true assuming there is a set $A \subset \mathbb{R}$ of positive Lebesgue measure such that for all $a \in A$, the set $\Gamma(f) \cap (\mathbb{R} \times \{a\})$ has Hausdorff dimension $\dim_H(\Gamma(f))-1$. Here, $\Gamma(f)$ denotes the graph of $f$. 
    

    In this paper, we give counterexamples to the previous conjecture. 
    
    \begin{thm} \label{thm:main_a}
        For all $\alpha \in (1,2)$, there is a function $f \colon [0,1] \to [0,1]$ whose graph $\Gamma(f)$ has Hausdorff dimension greater than $\alpha$ but for which there is a quasisymmetric mapping $g \colon \Gamma(f) \to Y$ onto a metric space $Y$ of Hausdorff dimension $1$. This $f$ has the property that $\Gamma(f) \cap (\mathbb{R} \times \{a\})$ has Hausdorff dimension $\dim_H(\Gamma(f))-1$ for every $a \in [0,1]$ in a set of positive Lebesgue measure.
    \end{thm}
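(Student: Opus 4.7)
The plan is to produce $f$ by iterating an explicit self-affine construction whose attractor is the graph of a continuous function, and then to invoke the weighted-metric technology from the author's earlier resolution of the inverse absolute continuity problem to build the quasisymmetric map $g$.

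I would begin by fixing $s \in (\alpha-1, 1)$ and choosing a self-similar Cantor set $K \subset [0,1]$ of Hausdorff dimension $s$ with $\mathcal{H}^s(K) \in (0,\infty)$. The function $f$ is then defined by an IFS on $[0,1]^2$ whose attractor $\Gamma$ satisfies three properties: every vertical slice $\Gamma \cap (\{x\} \times \mathbb{R})$ is a singleton (making $\Gamma$ the graph of a continuous function $f$); every horizontal slice $\Gamma \cap (\mathbb{R} \times \{y\})$ for $y$ in a positive-Lebesgue-measure set $A \subset [0,1]$ is an affine copy of $K$; and the combinatorial parameters are tuned so that $\dim_H \Gamma = 1+s$. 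Combinatorially this is realized by working in a non-square grid, choosing exactly one selected box in each column (forcing the graph property) while arranging for many rows to contain several boxes (producing Cantor fibers). Given positive, finite $\mathcal{H}^s$-measure on the fibers, Eilenberg's inequality gives $\dim_H \Gamma(f) \geq 1+s > \alpha$, and a matching upper bound comes from the natural self-affine covering. Consequently the slice at each $a \in A$ has dimension $s = \dim_H \Gamma(f) - 1$, verifying the level-set claim.

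The harder step is the construction of a quasisymmetric homeomorphism $g \colon \Gamma(f) \to Y$ onto a space of Hausdorff dimension $1$. I would equip $\Gamma(f)$ with a metric $d_Y$ obtained from the Euclidean metric by multiplying by a weight $w \colon \Gamma(f) \to (0,\infty)$ adapted to the hierarchy of self-affine cylinders. The weight is chosen so that on each cylinder it contracts horizontal distances relative to vertical ones by exactly the factor needed to crush the Hausdorff dimension of the horizontal Cantor fibers to $0$ in the new metric, while leaving vertical scales intact up to uniform constants. This is modeled on the weight the author used to produce a quasisymmetric homeomorphism of the plane whose inverse is not absolutely continuous; the same scale-uniform logarithmic-variation bounds show that the identity $\Gamma(f) \to (\Gamma(f), d_Y)$ is quasisymmetric, and a direct covering argument yields $\dim_H Y = 1$.

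The chief obstacle is verifying the quasisymmetry inequality across cylinders at different generations, where cross-scale control of the weight is required. The graph property (one point per vertical line) is what makes this tractable: it supplies a canonical horizontal coordinate on $\Gamma(f)$ that mirrors the horizontal coordinate of the planar Cantor set in the author's earlier construction, so the off-diagonal estimates from that work transplant here with only minor modifications to account for the vertical geometry. Once this quasisymmetry is in place, the remaining dimension computation and the identification of the positive-measure set $A$ of heights are routine.
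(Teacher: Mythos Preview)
Your plan has a structural obstruction at the very first step. The self-affine graph you describe---one box per column, with every horizontal slice at heights in a set $A$ of positive Lebesgue measure being an affine copy of a fixed Cantor set $K$---is forced to be a Bedford--McMullen carpet with \emph{uniform fibers}. Indeed, positive Lebesgue measure of $A$ requires every row of the pattern to be non-empty (otherwise the vertical projection is a null Cantor set), and having all non-trivial slices affinely equivalent to the same $K$ forces every row to contain the same number of boxes. But carpets with uniform fibers are exactly the class that Binder--Hakobyan--Li proved to be \emph{minimal} for conformal dimension. So no quasisymmetric map can send your $\Gamma(f)$ onto a space of dimension $1$; the weighted-metric step cannot succeed in principle, whatever weight you choose. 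Relatedly, a scalar (conformal) weight that is essentially constant on self-affine cylinders rescales horizontal and vertical extents of each graph piece by the same factor, so it cannot ``contract horizontal distances relative to vertical ones'' as you propose; an anisotropic weight would not yield a quasisymmetric change of metric.

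The paper avoids this obstruction by \emph{not} taking a pure self-affine graph. It fixes once and for all the quasisymmetric map $g_0\colon [0,1]^2\to X$ from \Cref{thm:qs}, together with its exceptional set $E\subset[0,1]^2$ (on which $g_0$ collapses dimension) determined by an explicit stopping rule. It then builds $f_n$ by starting from a uniform-fiber sawtooth carpet of dimension $D_n=2-1/n$ and, at each cylinder where the stopping rule fires, replacing the sawtooth by a straight linear pattern. Thus $\Gamma(f_n)$ is locally rectifiable outside $E$ (so the Lipschitz map $g_0$ keeps that part $1$-dimensional) and sits inside $E$ on the remaining part (so $g_0$ sends it to something of dimension $\le 1$). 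The delicate point---and the real content of the proof---is \Cref{prop:f_n}: showing via a biased-random-walk/barrier argument that the stopping rule fires on at most half of the carpet in $\mathcal{H}^{D_n}$-measure, so $\Gamma(f_n)$ still has dimension $D_n$. The level-set claim is then obtained by a separate counting/Frostman argument on horizontal strips. None of this machinery appears in your outline; the interplay between the fixed ambient map $g_0$ and the stopping-modified function is precisely what breaks the BHL minimality and is the idea you are missing.
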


    By gluing together functions as constructed in \Cref{thm:main_a}, we can also give an example that is the most extreme possible from the point of view of dimensions:
    
    \begin{thm} \label{thm:main}
        There is a function $f \colon [0,1] \to [0,1]$ whose graph $\Gamma(f)$ has Hausdorff dimension $2$ but for which there is a quasisymmetric mapping $g \colon \Gamma(f) \to Y$ onto a metric space $Y$ of Hausdorff dimension $1$. 
    \end{thm}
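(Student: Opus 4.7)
The strategy suggested by the remark in the text is to concatenate countably many rescaled copies of the functions produced by \Cref{thm:main_a}, with parameters $\alpha_n \nearrow 2$.

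First I would fix a sequence $\alpha_n \in (1,2)$ with $\alpha_n \to 2$, and for each $n$ invoke \Cref{thm:main_a} to obtain a function $f_n \colon [0,1] \to [0,1]$ with $\dim_H \Gamma(f_n) > \alpha_n$ together with a quasisymmetric homeomorphism $g_n \colon \Gamma(f_n) \to Y_n$ onto a metric space $Y_n$ of Hausdorff dimension $1$. I would then partition $[0,1]$ into a sequence of closed subintervals $I_n$ with disjoint interiors accumulating on the right endpoint, and define $f$ on $I_n$ by affinely rescaling $f_n$ in both the domain and the range, choosing the horizontal and vertical scalings (and translations) so that $f$ extends continuously across the shared endpoints of consecutive intervals and so that $f(1)$ is well defined. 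Since every rescaling is bi-Lipschitz, $\dim_H \Gamma(f|_{I_n}) > \alpha_n$, and the countable stability of Hausdorff dimension gives
\[
\dim_H \Gamma(f) \;\geq\; \sup_n \dim_H \Gamma(f|_{I_n}) \;\geq\; \sup_n \alpha_n \;=\; 2,
\]
while the reverse inequality is immediate since $\Gamma(f) \subset \mathbb{R}^2$.

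The heart of the argument is the construction of the global quasisymmetric map $g \colon \Gamma(f) \to Y$ onto a space of Hausdorff dimension $1$. I would form $Y$ by gluing suitably rescaled copies of the $Y_n$ at their endpoints in the same combinatorial pattern in which the intervals $I_n$ sit inside $[0,1]$, and define $g$ piecewise as the corresponding $g_n$ composed with the rescaling. The quasisymmetry of $g$ then needs to be verified for triples of points that straddle two or more pieces; the plan is to choose the horizontal and vertical scaling factors on piece $n$ so that the diameters of $\Gamma(f|_{I_n})$ and of the corresponding piece of $Y$ form comparable summable sequences, so that for cross-piece triples the quasisymmetric inequality reduces to an essentially one-dimensional estimate driven by the large-scale geometry along the sequence of gluing points. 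Since $Y$ is then a countable union of sets of Hausdorff dimension $1$, one obtains $\dim_H Y = 1$.

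The principal obstacle will be obtaining uniform control over the distortion functions $\eta_n$ of the $g_n$ from \Cref{thm:main_a}. If $\eta_n$ is allowed to grow without bound as $\alpha_n \to 2$, no choice of scaling factors will make the piecewise-defined $g$ globally quasisymmetric. To overcome this, I would either revisit the construction underlying \Cref{thm:main_a} (which is based on the author's inverse absolute continuity construction) to produce a sequence $\{g_n\}$ sharing a common distortion function $\eta$, or tune the horizontal and vertical scaling ratios on each piece to absorb the growth of $\eta_n$ into the geometric decay of the pieces, thereby yielding a single distortion function that controls $g$ across all of $\Gamma(f)$.
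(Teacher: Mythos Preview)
Your overall strategy matches the paper's: concatenate rescaled copies of the functions $f_n$ from \Cref{thm:main_a} on shrinking subintervals, use countable stability of Hausdorff dimension to get $\dim_H \Gamma(f) = 2$, and glue the target spaces to build $Y$. You have also correctly identified the main obstacle, namely uniform control of the distortion functions $\eta_n$.

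Where your proposal falls short is in how that obstacle is actually resolved. Your second suggestion, tuning the scaling ratios to ``absorb the growth of $\eta_n$ into the geometric decay of the pieces,'' cannot work: the quasisymmetry condition is scale-invariant, so rescaling a map leaves its distortion function unchanged. If the $\eta_n$ genuinely blow up as $\alpha_n \to 2$, no choice of decay rates for the pieces will repair this.

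Your first suggestion, revisiting the construction to obtain a common $\eta$, points in the right direction but misses the specific mechanism the paper uses. The paper does not merely arrange for the $g_n$ to share a distortion function; it uses a \emph{single} quasisymmetric map $g_0 \colon [0,1]^2 \to X$ from \Cref{thm:qs}, fixed once and for all independently of $n$, and constructs each $f_n$ so that the high-dimensional part of $\Gamma(f_n)$ lies in the fixed exceptional set $E \subset [0,1]^2$ while $\Gamma(f_n) \setminus E$ is locally rectifiable. Each $g_n$ is then literally the restriction of $g_0$ to $\Gamma(f_n)$. The glued target $\widetilde{Y}$ is built from rescaled copies $2^{-n}X$ of the \emph{same} space $X$, and the glued map is assembled from rescalings of the same $g_0$; this is what makes the cross-piece quasisymmetry estimate routine rather than delicate. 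This is exactly the point flagged in \Cref{exm}: having one ambient quasisymmetry on the square that works for every $\alpha$ is what allows the passage from \Cref{thm:main_a} to \Cref{thm:main}.
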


    We recall the fundmental definition: a homeomorphism $f \colon X \to Y$ between metric spaces $(X,d_X)$ and $(Y, d_Y)$ is \textit{quasisymmetric} if there exists a homeomorphism $\eta \colon [0, \infty) \to [0, \infty)$ such that for all distinct points $x,y,z \in X$,
    \[\frac{d_Y(f(x),f(y))}{d_Y(f(x),f(z))} \leq \eta\left(\frac{d_X(x,y)}{d_X(x,z)}\right).\]
    Quasisymmetric maps roughly preserve relative distance between triples of points, as dictated by the control function $\eta$. Conformal mappings between planar domains satisfy the quasisymmetry condition locally, and so quasisymmetric maps can be considered a natural generalization of conformal maps to the setting of metric spaces. 
    
    Our construction is based on the author's solution to the inverse absolute continuity problem for quasisymmetric mappings on Euclidean space \cite{Rom:19b}. This problem asks whether a quasisymmetric mapping from $\mathbb{R}^n$ onto some other metric space can map a set of positive Lebesgue $n$-measure onto a set of Hausdorff $n$-measure zero. See \cite[Questions 5,6]{HeiS:97} for a statement. It is shown in \cite{Rom:19b} that this can be done, and in fact the Hausdorff dimension can be lowered to less than an arbitrary positive value. The relevant result for our purpose (specialized to $\mathbb{R}^2$) is the following. 
    
    \begin{thm} \label{thm:qs}
        Fix constants $\delta>0$ and $\beta \in (0,1)$. There is a metric space $X$ and quasisymmetric mapping $g_0 \colon [0,1]^2 \to X$ with the following two properties:
        \begin{itemize}
            \item[(1)] $g_0$ is $L$-Lipschitz for some fixed value $L$.
            \item[(2)] There is a set $E \subset [0,1]^2$ with Lebesgue measure greater than $\beta$ such that the Hausdorff dimension of $g_0(E)$ is at most $\delta$.
        \end{itemize}
    \end{thm}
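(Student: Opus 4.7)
The plan is to realize $X$ as the set $[0,1]^2$ equipped with a new metric $d_X$ obtained by compressing distances on a carefully chosen subset, with $g_0$ taken to be the identity. Starting from the Euclidean metric, I will iteratively select, at each dyadic scale $n$, a collection $\mathcal{S}_n$ of dyadic squares of side length $2^{-n}$ whose union covers a large proportion of $[0,1]^2$ (approaching $\beta$ in the limit), and within which the metric will be compressed by a multiplicative factor $\lambda_n \in (0,1)$. I would arrange that the squares in $\mathcal{S}_n$ are separated from those at other scales by ``buffer'' regions of size comparable to $2^{-n}$, so that the scales interact cleanly. The set $E$ will be (essentially) the limsup of $\bigcup \mathcal{S}_n$, chosen so that $\mathcal{L}^2(E) > \beta$, and the compression factors $\lambda_n$ will be chosen decaying fast enough that $\sum_n |\mathcal{S}_n| (\lambda_n 2^{-n})^\delta \to 0$, which will force $\dim_H g_0(E) \leq \delta$.

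Concretely, I would define $d_X(x,y)$ as the infimum of $\sum_i \ell(\gamma_i)$ over polygonal chains $\gamma_1, \ldots, \gamma_k$ from $x$ to $y$, where segments lying inside a selected square $Q \in \mathcal{S}_n$ contribute $\lambda_n$ times their Euclidean length, and segments outside every selected square contribute their full Euclidean length. Taking the one-segment chain immediately gives $d_X(x,y) \leq |x-y|$, so the identity map is $1$-Lipschitz, establishing (1). The Hausdorff dimension bound in (2) then follows by covering $E$ at scale $n$ by the images of the squares in $\mathcal{S}_n$, which have $d_X$-diameter $\lesssim \lambda_n 2^{-n}$, and verifying that the chosen decay of $\lambda_n$ drives the $\delta$-Hausdorff content to zero.

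The principal obstacle, as always with such constructions, is verifying the quasisymmetry of the identity map with a single control function $\eta$. The issue is the lower bound on $d_X$: one must rule out the possibility that a near-optimal chain from $x$ to $y$ exploits compressed regions in a way that ruins the ratio comparison $d_X(x,y)/d_X(x,z) \lesssim \eta(|x-y|/|x-z|)$. My approach would be to prove a chain-straightening lemma asserting that any near-optimal chain can be modified to stay within a Euclidean neighborhood of size $C|x-y|$ of the segment $\overline{xy}$; together with the buffer-zone separation, this localizes the relevant compressions to scales $n$ with $2^{-n} \gtrsim |x-y|$, giving a two-sided estimate of the form $d_X(x,y) \asymp \varphi(x,y) \cdot |x-y|$, where $\varphi(x,y)$ depends only on the local ``compression density'' seen by the pair $(x,y)$. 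Controlling how $\varphi$ varies with triples $(x,y,z)$ at comparable scales — in particular showing that $\varphi(x,y)/\varphi(x,z)$ is bounded whenever $|x-y| \asymp |x-z|$ — is the main technical step, and is where the careful choice of scale separation and the uniformity of the $\mathcal{S}_n$ across $[0,1]^2$ must be used in an essential way. Once this is in place, the quasisymmetry estimate assembles in a standard fashion.
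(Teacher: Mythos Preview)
Your outline has the right overall shape---define a weight on $[0,1]^2$, take the induced length metric, let $g_0$ be the identity---but the specific mechanism you propose cannot produce a quasisymmetric map, and the difficulty is precisely at the step you flag as ``the main technical step.''

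The problem is the boundary of a compressed square. Take $Q\in\mathcal{S}_n$, a point $x$ on $\partial Q$, the center $y$ of $Q$, and the reflection $z$ of $y$ across $\partial Q$, so that $z\notin Q$ and $|x-y|=|x-z|\asymp 2^{-n}$. A path from $x$ to $y$ can run entirely inside $Q$, giving $d_X(x,y)\lesssim \lambda_n 2^{-n}$, whereas any path from $x$ to $z$ must traverse an uncompressed segment of length $\asymp 2^{-n}$ (your buffer zones only separate different scales, not the inside of $Q$ from its immediate exterior), so $d_X(x,z)\asymp 2^{-n}$. Hence $\varphi(x,y)/\varphi(x,z)\asymp \lambda_n$. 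Since your Hausdorff content estimate forces $\lambda_n\to 0$, the ratio $\varphi(x,y)/\varphi(x,z)$ is not bounded for triples with $|x-y|\asymp|x-z|$, and no single $\eta$ can work. The chain-straightening lemma, even if true, does not help here: the issue is not long detours but the abrupt jump in conformal weight across $\partial Q$.

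There is a second, more elementary problem. If the families $\mathcal{S}_n$ at different scales are genuinely disjoint (as your buffer separation suggests), then $\sum_n \mathcal{L}^2(\bigcup\mathcal{S}_n)\le 1$, so by Borel--Cantelli the $\limsup$ set has Lebesgue measure zero and cannot serve as $E$.

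The paper's construction (following \cite{Rom:19b}) repairs both issues with one device: a nested $M$-adic grid where each square is partitioned into an outer ring (weight unchanged), a middle ring (weight \emph{multiplied by} $M-3$), and a center (weight multiplied by a small constant $r$). The step-up ring is the essential idea you are missing: it ensures that the integral of the weight along any segment crossing a square is comparable to the weight of the parent times the segment length, which is exactly what gives a uniform two-sided bound on $\varphi$ across cell boundaries. The set $E$ is then the nested intersection of centers (subject to a stopping rule), which has large measure; the multiplicative accumulation of the fixed factor $r$ over many levels is what drives $\dim_H g_0(E)$ down, without any single-scale factor tending to zero.
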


    The basic idea to prove \Cref{thm:main_a} is to construct a function whose graph oscillates greatly within the set $E$ (which gets mapped onto a set of small dimension) while its graph outside of $E$ is locally rectifiable. \Cref{thm:main_a} is then a consequence of properties (1) and (2) of \Cref{thm:qs}. 
    
    In the proof of \Cref{thm:main_a}, we use two separate constructions superimposed on each other. First is the construction of a quasisymmetric map $g_0$ and corresponding set $E$ as in \Cref{thm:qs} (for fixed values of $\beta$ and $\delta$), which we borrow from \cite{Rom:19b} except with one modification needed for our application here. This is carried out in \Cref{sec:g0}. Next, in \Cref{sec:fn}, we define for each $n \in \mathbb{N}$ a function $f_n \colon [0,1] \to [0,1]$ by taking a standard Bedford--McMullen carpet with uniform fibers of Hausdorff dimension $D_n = 2-1/n$, then modifying it outside the set $E$ to be locally rectifiable there. The main step in this section is showing that $\Gamma(f_n) \cap E$ still has positive Hausdorff $D_n$-measure. In \Cref{sec:proof}, we obtain \Cref{thm:main_a} by taking $f= f_n$ for sufficiently large $n$ and letting $g$ be the restriction of $g_0$ to $\Gamma(f_n)$ and $Y$ be the image of $g$. Finally, we combine rescaled version of the functions $f_n$ and map $g_0$ together to obtain the $f$ and $g$ required in \Cref{thm:main}.


    We conclude this introduction with a few additional remarks. 

       \begin{rem}
        The map $g$ and the metric space $Y$ we use to prove \Cref{thm:main_a} and \Cref{thm:main} do not have any particularly nice structure beyond the requirements of the theorem. One might refine the original problem of Binder--Hakobyan--Li, for example, by requiring $g$ to be the restriction of a quasiconformal homeomorphism of $\mathbb{R}^2$ or $\mathbb{R}^3$. We do not know whether such a map $g$ exists.
        
        One of the main results of the paper \cite{NR:20} states that the map $g_0$ in \Cref{thm:qs} can be taken to be the restriction of a quasiconformal homeomorphism of $\mathbb{R}^3$, but with the weaker conclusion that the Hausdorff $2$-measure of $g_0(E)$ is zero, rather than $g_0(E)$ having smaller Hausdorff dimension than $E$. Because of this weaker conclusion, it does not seem possible to leverage the result in \cite{NR:20} to show that $g$ can be the restriction of a quasiconformal homeomorphism of $\mathbb{R}^3$. 
    \end{rem}
    
    Next, to put the ideas of this paper in perspective, we give the following example, which we learned from Hrant Hakobyan.

   \begin{exm} \label{exm}
       A similar but simpler construction gives functions as in \Cref{thm:main_a} but without the property regarding the Hausdorff dimension of $\Gamma(f) \cap (\mathbb{R} \times \{a\})$. We briefly outline this construction. Consider a sawtooth pattern such as the one in \Cref{fig:pattern1} but with the rectangles in one row deleted. One obtains a totally disconnected self-affine set $F$ by iterating this pattern; this can be carried out so the Hausdorff dimension of $F$ is at least $\alpha$ for any $\alpha \in (1,2)$. The set $F$ has the property that each projection onto a coordinate axis has Hausdorff dimension less than $1$. It follows from a result of Mackay \cite{Mac:11} that $F$ has conformal dimension $0$. In particular, given a value $\varepsilon \in (0,1)$, there is a quasisymmetric map $g_\varepsilon$ from $F$ to a metric space $Y_\varepsilon$ of Hausdorff dimension less than $\varepsilon$. 

        Define a continuous function $f \colon [0,1] \to [0,1]$ by filling in the holes in $F$ with straight line segments. Then $F \subset \Gamma(f)$. One can extend the quasisymmetry $g_\varepsilon$ from the previous paragraph to a quasisymmetric map defined on $\Gamma(f)$, also denoted by $g_\varepsilon$, with each straight line segment getting mapped to a rectifiable curve. Thus $g_\varepsilon(\Gamma(f))$ has Hausdorff dimension $1$, and so $\Gamma(f)$ has conformal dimension $1$. 
        
        However, for almost every $a \in [0,1]$, the level set of $\Gamma(f) \cap (\mathbb{R} \times\{a\})$ has Hausdorff dimension $0$. In addition, this example does not seem to yield \Cref{thm:main}, at least without more careful work in constructing the quasisymmetric map $g_\varepsilon$. Our proof relies on the fact that \Cref{thm:qs} provides a single quasisymmetric mapping defined on the unit square that can be applied to the functions obtained in \Cref{thm:main_a} independently of $\alpha$. 
   \end{exm}

   In addition to the previous example, see \cite[Section 7]{BHL:23} for an example of a non-continuous function whose graph has conformal dimension $0$, based on a theorem of Tukia \cite{Tuk:89} on quasisymmetric distortion of subsets of $\mathbb{R}$ (in fact, \Cref{thm:qs} can be thought of as an analogue of Tukia's theorem for higher dimensions).

        Finally, in light of the results in this paper, it remains an open problem to give sufficient conditions for the graph of a continuous function to be minimal for conformal dimension. We suggest a possibility:

        \begin{ques}
            Let $f \colon [0,1] \to [0,1]$ be a continuous function whose graph has Hausdorff dimension $d$ and is homogeneous in the following sense: there exists a constant $C>1$ such that \[C^{-1}(t-s) \leq \mathcal{H}^d(\Gamma(f|_{[s,t]})) \leq C(t-s)\] for all $0 \leq s < t \leq 1$. Is $\Gamma(f)$ necessarily minimal for conformal dimension?
        \end{ques}

        Here, $\mathcal{H}^d$ denotes $d$-dimensional Hausdorff measure. 

    \subsection*{Acknowledgements}

    I thank Hrant Hakobyan for feedback on a draft of this paper, and in particular for suggesting \Cref{exm}. I also thank Chris Bishop for helpful discussions on the topic.

    \section{Constructing the quasisymmetric map} \label{sec:g0}

    In this section, we explain the construction of a quasisymmetric map $g_0$ satisfying the properties of \Cref{thm:qs}. This construction we give is essentially the same as that found in  \cite{Rom:19b}, although we modify one of the details in order to find the set $E$ more explicitly than in \cite{Rom:19b}. As a further advantage, the set $E$ found here is compact, unlike the set found in \cite{Rom:19b}. It depends on the choice of a sufficiently large odd integer $M \in \mathbb{N}$ and a sufficiently small scaling factor $r>0$. These constants are fixed once for the entire paper. For the argument presented here to work, we may take any $M \geq 78$ and any $r < M^{-3}$.

    Start with the unit square $Q_0 = [0,1]^2$. Divide $Q_0$ into an $M \times M$ grid of subsquares 
    \[Q(i_1, j_1) = \left\{(x,y): \frac{i_1-1}{M} \leq x \leq \frac{i_1}{M}, \frac{j_1-1}{M} \leq y \leq \frac{j_1}{M}\right\}, \]
    indexed by parameters $i_1, j_1 \in \{1, \ldots, M\}$. Denote the collection of these squares by $\mathcal{Q}_1$. Each square $Q(i_1,j_1) \in \mathcal{Q}_1$ is then further divided into an $M \times M$ grid of subsquares $Q(i_1,j_1,i_2,j_2)$ in like manner, and so forth. In this way we obtain for all $m \in \mathbb{N}$ a collection $\mathcal{Q}_m$ of squares \[Q(i_1, j_1, \ldots, i_m, j_m)\] of side length $M^{-m}$, where $i_1,j_1, \ldots, i_m,j_m \in \{1,\ldots, M\}$. We write $I_m$ to denote the multiindex $(i_1, j_1, \ldots, i_m, j_m)$.

    The set of indices for each level are divided into three types:
    \begin{itemize}
        \item $\mathcal{I}_1$ denotes the set of tuples ${(i,j) \in \{1,\ldots, M\}^2}$ such that $i=1$, $i=M$, $j=1$ or $j=M$.
        \item $\mathcal{I}_2$ denotes the set of tuples $(i,j) \in \{1,\ldots, M\}^2$ not in $\mathcal{I}_1$ such that $i=2$, $i=M-1$, $j=2$ or $j=M-1$.
        \item $\mathcal{I}_3$ denotes the set of tuples not in $\mathcal{I}_1$ or $\mathcal{I}_2$.
    \end{itemize}   See \Cref{fig:qs} for a schematic illustration.

    \begin{figure} 
    \centering
        \begin{tikzpicture}[scale=.7]
        \draw[fill=gray] (1,1) rectangle (8,2);
        \draw[fill=gray] (1,1) rectangle (2,8);
        \draw[fill=gray] (1,7) rectangle (8,8);
        \draw[fill=gray] (7,1) rectangle (8,8);
        \draw[xstep=1,ystep=1.0,black,thin] (0,0) grid (9,9);
    \end{tikzpicture}
        \caption{The sets $\mathcal{I}_1$ (outer ring), $\mathcal{I}_2$ (middle ring), $\mathcal{I}_3$ (center).}
    \label{fig:qs}
    \end{figure}
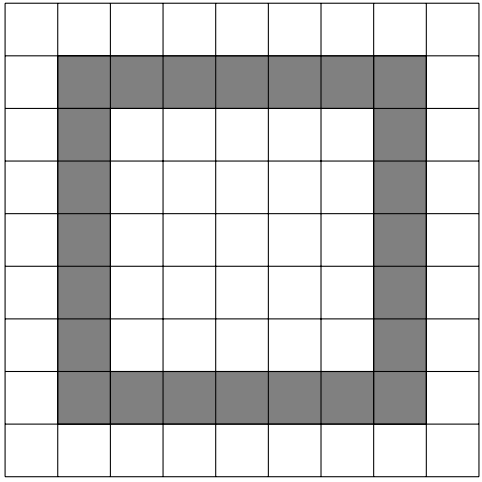

    We define for all $m \in \mathbb{N}$ a preliminary weight $\widetilde{\rho}_m \colon Q_0 \to (0,\infty)$ inductively as follows. Set $\widetilde{\rho}_0 = 1$. Next, assume that the weight $\widetilde{\rho}_{m-1}$ is defined for a given $m \in \mathbb{N}$. Then define $\widetilde{\rho}_{m}$ on the interior of each square $Q(I_m)$ by the formula
    \[\widetilde{\rho}_m(x) = \left\{ \begin{array}{ll} \widetilde{\rho}_{m-1}(x) & \text{ if } (i_m, j_m) \in \mathcal{I}_1 \\
    (M-3) \cdot \widetilde{\rho}_{m-1}(x) & \text{ if } (i_m, j_m) \in \mathcal{I}_2 \\ r \cdot \widetilde{\rho}_{m-1}(x) & \text{ if } (i_m, j_m) \in \mathcal{I}_3 \end{array} \right. .\]
    Extend this definition to all of $Q_0$ by lower semicontinuity. Note that $\widetilde{\rho}_m$ is constant on the interior of each cube $Q(I_m) \in \mathcal{Q}_m$, and so we also denote this value by $\widetilde{\rho}_m(I_m)$.
    
    We now define the weights $\rho_m \colon Q \to (0,\infty)$ in inductive fashion similarly to the $\widetilde{\rho}_m$ but with the following \textit{stopping condition}. In the following, set $J_k(i_k, j_k) = 1$ if $(i_k,j_k) \in \mathcal{I}_1 \cup \mathcal{I}_2$ and $J_k(i_k,j_k) = 0$ if $(i_k, j_k) \in \mathcal{I}_3$. 
    \begin{align*}
      \text{Stopping condition:} & \text{ If } \sum_{k=1}^m J_k(i_k,j_k) > m/3 \text{ for some } m \in \mathbb{N} \text{, then set }
      \rho_{m'}(x) = \rho_{m-1}(x) \\ & \text{ for all } m' \geq m.   
    \end{align*}
    Intuitively, the stopping condition occurs if the weight $\rho_m$ on a cube $Q(I_m)$ has ``stepped up'' or ``stayed flat'' more than a third of the time. Provided $M$ is sufficiently large, the stopping condition occurs only on a small number of cubes. See Section 5 of \cite{Rom:19b} for a justification of this claim for the original construction; in \Cref{prop:f_n} below, we prove the precise version needed for this paper. 
    
    We note that the stopping condition here is slightly different than in \cite{Rom:19b}, which essentially uses the relation $\sum_{k=1}^m J_k(i_k, j_k) > m/2$ as the stopping condition. The new stopping condition allows us to obtain the set $E$ in \Cref{thm:qs} more explicitly than in \cite{Rom:19b}. For each $m \in \mathbb{N}$, let $\widetilde{\mathcal{Q}}_m$ denote the subset of squares in $\mathcal{Q}_m$ that satisfy (or for which one of its parent squares satisfies) the stopping condition. Let $E_m = \bigcup \mathcal{Q}_m \setminus \bigcup \widetilde{\mathcal{Q}}_m$. In words, $E_m$ is the set of points in $Q_0$ which have not been affected by the stopping condition after $m$ levels, and in particular $\rho_m(x) = \widetilde{\rho}_m(x)$ for any such point $x$. Then set $E = \bigcap_{m=1}^\infty E_m$. Observe that $E$ is a compact set.

    For each $m \in \mathbb{N}$, the weight $\rho_m$ induces a length metric on $Q$, denoted by $d_m$. These metrics converge pointwise to a metric $d$; this is shown in Section 3 of \cite{Rom:19b} for the weights $\widetilde{\rho}_m$ but there is no essential difference. We then take $X$ to be the metric space consisting of the set $Q_0$ equipped with the metric $d$. Take $g$ to be the change-of-metric or identity map from $Q_0$ (equipped with the Euclidean metric) onto $X$. 

    \begin{lemm}
        The map $g \colon Q_0 \to X$ is quasisymmetric.
    \end{lemm}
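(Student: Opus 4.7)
The plan is to verify quasisymmetry by establishing two combinatorial estimates on the weights $\rho_m$ and then invoking the chaining argument of \cite[Section 3]{Rom:19b}. Since $g$ is the set-theoretic identity, the task reduces to producing a control function $\eta$ comparing $d(x,y)/d(x,z)$ to $|x-y|/|x-z|$, where $|\cdot|$ is the Euclidean distance.

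First I would prove a cube-wise diameter estimate: for every $Q(I_m) \in \mathcal{Q}_m$,
\[
\diam_d Q(I_m) \asymp \rho_m(I_m) \cdot M^{-m}
\]
with constants depending only on $M$ and $r$. The upper bound follows from integrating $\rho_m$ along a polygonal path joining any two points of $Q(I_m)$, while the lower bound comes from the observation that any path between two opposite faces of $Q(I_m)$ must cross a strip of subcubes of definite width whose weight is at least a fixed fraction of $\rho_m(I_m)$. On cubes where the stopping condition has been triggered the weight is frozen on all descendants, so $d$ restricted to such a cube is a constant multiple of the Euclidean metric and the estimate is trivial.

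Next I would prove a uniform Harnack property: if $Q, Q' \in \mathcal{Q}_m$ share a face, then $\rho_m(Q)/\rho_m(Q') \leq C(M, r)$. For siblings inside a common parent, the indices $(i_m,j_m)$ and $(i_m',j_m')$ differ by a unit in one coordinate, so the adjacent types lie in one of the pairs $(\mathcal{I}_1,\mathcal{I}_1)$, $(\mathcal{I}_1,\mathcal{I}_2)$, $(\mathcal{I}_2,\mathcal{I}_2)$, $(\mathcal{I}_2,\mathcal{I}_3)$, or $(\mathcal{I}_3,\mathcal{I}_3)$, and the ratio of level-$m$ factors is bounded by $(M-3)/r$. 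For cross-parent neighbors both cubes lie on the outer ring of their respective parents (hence in $\mathcal{I}_1$), so the level-$m$ factors coincide and the ratio reduces to the parent-level Harnack ratio, giving the claim by induction on $m$. The stopping-condition case is handled by the observation that $J_k$ differs by at most $1$ between sibling cubes, so a stopped cube and an unstopped neighbor differ in weight by at most one extra factor of $\max\{M-3, 1/r\}$.

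Granted these two ingredients, the three-point condition follows by the standard chaining argument: given distinct $x, y, z \in Q_0$, choose generations $m, n$ so that $|x-y| \asymp M^{-m}$ and $|x-z| \asymp M^{-n}$, write $d(x,y) \asymp \rho_m(I_m) M^{-m}$ and $d(x,z) \asymp \rho_n(I_n) M^{-n}$ for appropriate cubes containing $x$, and bound the ratio $\rho_m/\rho_n$ by a power of the Harnack constant controlled by $|m-n|$. The main obstacle is verifying that the replacement of the original stopping threshold $m/2$ of \cite{Rom:19b} by the new threshold $m/3$ does not spoil the Harnack estimate; because the sibling argument above is insensitive to the precise threshold value, using only that $J_k$ can change by at most one between adjacent indices, this check is essentially immediate, and the proof of \cite[Section 3]{Rom:19b} transfers with only cosmetic changes.
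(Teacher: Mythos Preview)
Your proposal is essentially the same approach as the paper's, which simply defers to \cite[Proposition 4.1]{Rom:19b} and remarks that replacing $\widetilde{\rho}_m$ by $\rho_m$ (and the threshold $m/2$ by $m/3$) makes no significant difference; your sketch spells out the ingredients---diameter estimates, a Harnack inequality for adjacent cubes, and a chaining argument---that underlie that cited result. One minor correction: the quasisymmetry argument in \cite{Rom:19b} is in Section~4 (Proposition~4.1), not Section~3, which contains the diameter estimates you also invoke.
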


    This is essentially proved as Proposition 4.1 of \cite{Rom:19b}. Note that \cite{Rom:19b} proves the claim for the metric derived from the weights $\widetilde{\rho}_m$ instead of the metric $d$, but again there is no significant difference between these situations. 


    The following is a version of Lemma 3.3 in \cite{Rom:19b}:
    \begin{lemm} \label{lemm:diameter}
        There exists a constant $C_1$ such that for all $Q(I_m) \in \mathcal{Q}_m$, 
        \[\diam g(Q(I_m)) \leq C_1 \rho_m(I_m) M^{-m}. \]
    \end{lemm}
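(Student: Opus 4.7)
The plan is to produce, for any two points $x,y \in Q(I_m)$, a rectifiable curve $\gamma$ in $Q(I_m)$ from $x$ to $y$ whose $d$-length is bounded by $C_1 \rho_m(I_m) M^{-m}$ with $C_1$ independent of $m$ and $I_m$. Since the length in $(X,d)$ of any such curve is controlled by $\liminf_{m'\to\infty}\int_\gamma \rho_{m'}\,ds$ (the $d_{m'}$-length), it suffices to produce one $\gamma$ for which this integral is uniformly bounded in $m'\geq m$.

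First I would handle a boundary-hugging loop. The outermost layer of $Q(I_m)$ at level $m{+}1$ consists of sub-cells with $(i_{m+1},j_{m+1}) \in \mathcal{I}_1$, on which the weight is unchanged from $\rho_m(I_m)$. Iterating, the outermost sub-cell of an outermost sub-cell is again of type $\mathcal{I}_1$, so the weight remains $\rho_m(I_m)$ at every finer scale along a suitably chosen frame. Consequently, there is a closed curve $\gamma_0$ in the outer $M^{-m-1}$-neighborhood of $\partial Q(I_m)$ with Euclidean length at most $4M^{-m}$ such that $\rho_{m'} \le \rho_m(I_m)$ along $\gamma_0$ for every $m'\geq m$; hence $\int_{\gamma_0}\rho_{m'}\,ds \leq 4\rho_m(I_m) M^{-m}$.

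Next, for any $x \in Q(I_m)$, I would construct a radial path from $x$ to $\gamma_0$ recursively. If the sub-cell $Q(I_{m+1})$ containing $x$ has index $(i_{m+1},j_{m+1})\in\mathcal{I}_1$, then $x$ is already adjacent to $\gamma_0$ and a segment of Euclidean length $\lesssim M^{-m-1}$ with weight at most $\rho_m(I_m)$ suffices. If $(i_{m+1},j_{m+1})\in \mathcal{I}_2$, a segment of Euclidean length $\lesssim M^{-m-1}$ crosses the $\mathcal{I}_2$-strip where the weight is $(M-3)\rho_m(I_m)$, contributing $\lesssim \rho_m(I_m)M^{-m}$. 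If $(i_{m+1},j_{m+1})\in \mathcal{I}_3$, first apply the claim inductively inside $Q(I_{m+1})$ to reach the boundary of $Q(I_{m+1})$ at cost $\lesssim \rho_{m+1}(I_{m+1}) M^{-m-1} = r\,\rho_m(I_m) M^{-m-1}$, and then traverse one $\mathcal{I}_2$-strip of $Q(I_m)$ to reach $\gamma_0$. Summing the contributions of all levels encountered and using $r < M^{-3}$ gives a geometric series whose total is bounded by a constant multiple of $\rho_m(I_m) M^{-m}$.

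Finally, concatenating a radial path from $x$ to $\gamma_0$, an arc along $\gamma_0$, and a radial path from $\gamma_0$ to $y$ produces the required curve and yields $\diam g(Q(I_m)) \leq C_1 \rho_m(I_m) M^{-m}$. The main obstacle is Step 2: verifying that the inductive construction yields a uniform constant across all $m$ and all multi-indices $I_m$, including multi-indices where the stopping condition has been triggered at some intermediate level. The stopping condition only freezes the weight at an earlier (smaller) value, so it cannot make any contribution worse than the unconditional bound; nevertheless, one needs to check the recursion carefully in the presence of frozen weights to ensure the geometric-series summation still absorbs into a single constant $C_1$.
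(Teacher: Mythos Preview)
The paper does not actually prove this lemma; it simply records it as a version of Lemma~3.3 in \cite{Rom:19b}. Your sketch reconstructs what is presumably the argument there: exploit that the boundary of every cell lies in $\mathcal{I}_1$-sub-cells at all finer scales (so its weight is frozen at $\rho_m(I_m)$), and connect an arbitrary interior point to that boundary by a recursive path whose total cost is a convergent geometric series.

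The outline is sound, but two of your case estimates need tightening before the recursion closes. In the $\mathcal{I}_2$ case you assert that ``a segment of Euclidean length $\lesssim M^{-m-1}$ crosses the $\mathcal{I}_2$-strip where the weight is $(M-3)\rho_m(I_m)$''; but that is only the level-$(m{+}1)$ weight, and along a generic straight segment the weight can pick up further factors of $(M-3)$ at every subsequent scale. The fix is either to recurse inside the $\mathcal{I}_2$-cell as well, or to route the crossing along $\partial Q(I_{m+1})$, where the weight really is frozen. Similarly, in the $\mathcal{I}_3$ case ``traverse one $\mathcal{I}_2$-strip to reach $\gamma_0$'' is too quick: from $\partial Q(I_{m+1})$ you may first have to cross up to $M-4$ other $\mathcal{I}_3$-cells, then the $\mathcal{I}_2$ ring, then the $\mathcal{I}_1$ ring. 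Once all such crossings are routed along the level-$(m{+}1)$ grid of cell boundaries (where the weight is at most $(M-3)\rho_m(I_m)$ at \emph{every} scale), the recursion becomes $C \geq C(M-3)/M + C'$, which closes with a finite $C_1$ depending only on $M$. Note that the convergence of the series hinges on $(M-3)/M<1$, not on $r<M^{-3}$ as you suggest. Your observation about the stopping condition is correct: freezing $\rho$ can only decrease the relevant integrals.
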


    We now arrive at the main result of this section: that the set $E$ has small Hausdorff dimension in the metric $d$. Here, $\mathcal{H}_{\varepsilon}^D$ denotes the $D$-dimensional Hausdorff $\varepsilon$-content with respect to $d$, and $\mathcal{H}^D$ denotes the $D$-dimensional Hausdorff measure, or Hausdorff $D$-measure. 

    \begin{prop} \label{prop:hausdorff}
    The Hausdorff $1$-measure on $g(E)$ satisfies $\mathcal{H}^{1}(E) = 0$.
    \end{prop}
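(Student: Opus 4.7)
The plan is to produce, for each $m \in \mathbb{N}$, an efficient cover of $E$ by the level-$m$ cubes that have not been affected by the stopping condition, and to bound the sum of the diameters of their $g$-images. Since $E \subset E_m = \bigcup(\mathcal{Q}_m \setminus \widetilde{\mathcal{Q}}_m)$, and since by \Cref{lemm:diameter} each $g(Q(I_m))$ with $Q(I_m) \in \mathcal{Q}_m \setminus \widetilde{\mathcal{Q}}_m$ has diameter at most $C_1 \rho_m(I_m) M^{-m}$, it will suffice to show that the quantity
\[S_m := \sum_{Q(I_m) \in \mathcal{Q}_m \setminus \widetilde{\mathcal{Q}}_m} \rho_m(I_m) M^{-m}\]
tends to zero as $m \to \infty$, while also verifying that the scale of the cover tends to zero, so that these covers witness the Hausdorff $1$-measure rather than an $\varepsilon$-content at a fixed scale.

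The key structural input is that on any $Q(I_m) \in \mathcal{Q}_m \setminus \widetilde{\mathcal{Q}}_m$ the stopping condition has never been triggered, so if $a$, $b$, $c$ count the coordinates $(i_k,j_k)$ of types $\mathcal{I}_1$, $\mathcal{I}_2$, $\mathcal{I}_3$ respectively, then $a+b \leq m/3$ and
\[\rho_m(I_m) = \widetilde{\rho}_m(I_m) = (M-3)^b r^c.\]
Using $|\mathcal{I}_1| = 4(M-1)$, $|\mathcal{I}_2| = 4(M-3)$, $|\mathcal{I}_3| = (M-4)^2$ and grouping the multi-indices of a given composition via $\binom{m}{a,b,c} = \binom{m}{a+b}\binom{a+b}{a}$ (then summing the inner variable with the binomial theorem), I would write
\[S_m \leq M^{-m}\sum_{k=0}^{\lfloor m/3\rfloor} \binom{m}{k}\bigl(|\mathcal{I}_1| + (M-3)|\mathcal{I}_2|\bigr)^k \bigl(r|\mathcal{I}_3|\bigr)^{m-k},\]
where $k = a+b$ counts the ``non-shrinking'' positions.

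Setting $A = (|\mathcal{I}_1| + (M-3)|\mathcal{I}_2|)/M$ and $B = r|\mathcal{I}_3|/M$, an elementary computation gives $A \leq 4M$ and, using $r < M^{-3}$, also $B < M^{-2}$. Since $B \ll 1 \ll A$, the sequence $\binom{m}{k} A^k B^{m-k}$ is increasing in $k$ on the range $[0,m/3]$, and so the sum is at most $(m+1)$ times its last term at $k = \lfloor m/3\rfloor$; combined with the crude estimate $\binom{m}{k}\leq 2^m$, this yields
\[S_m \leq (m+1)\bigl(2 A^{1/3} B^{2/3}\bigr)^m \leq (m+1)\left(\frac{2\cdot 4^{1/3}}{M}\right)^m,\]
which decays geometrically for $M \geq 78$. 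The individual diameters at level $m$ on surviving cubes are at most $C_1((M-3)r^2/M^3)^{m/3}$, which also tends to zero, so the covers are at scales $\varepsilon_m \to 0$ and $\mathcal{H}^1(g(E)) = 0$ follows. The main obstacle in the argument is purely arithmetic: one must verify that the geometric shrinkage provided by the at least $2m/3$ indices of type $\mathcal{I}_3$ defeats both the growth from the up to $m/3$ indices in $\mathcal{I}_1 \cup \mathcal{I}_2$ and the combinatorial factor for choosing their positions, and this is exactly what the threshold $m/3$ in the (new) stopping condition, together with $r<M^{-3}$ and $M$ large, is calibrated to guarantee.
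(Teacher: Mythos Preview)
Your argument is correct and follows the same overall plan as the paper: cover $E$ by the surviving level-$m$ cubes $\mathcal{Q}_m \setminus \widetilde{\mathcal{Q}}_m$ and show that the sum of the $g$-diameters tends to zero. The difference is purely in execution. You stratify the surviving cubes by their $(\mathcal{I}_1,\mathcal{I}_2,\mathcal{I}_3)$-composition and sum the exact weights via multinomial coefficients, then control the resulting trinomial sum with a largest-term estimate. The paper instead observes that on \emph{any} surviving cube the weight satisfies a uniform bound: writing $\rho_m(I_m)=(M-3)^{p}r^{q}$ with $q\ge 2m/3$ and $p\le q/2$ (immediate from the stopping threshold), one gets $\rho_m(I_m)\le (M-3)^{q/2}r^{q}\le r^{q/2}\le r^{m/3}$ using $M-3<r^{-1}$. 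Combined with the crude count $|\mathcal{Q}_m|=M^{2m}$, this already gives
\[
\sum_{Q(I_m)\in \mathcal{Q}_m\setminus\widetilde{\mathcal{Q}}_m}\diam g(Q(I_m))\le C_1\,M^{2m}\bigl(r^{1/3}M^{-1}\bigr)^m = C_1\bigl(r^{1/3}M\bigr)^m \to 0
\]
directly from $r<M^{-3}$. The paper's route is a few lines and needs no binomial-coefficient bounds or monotonicity of terms; your type-stratified count would yield sharper constants if one wished to push the parameters (smaller $M$, different stopping threshold), but for the stated result the uniform bound suffices.
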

    \begin{proof}
        Consider the collection $\mathcal{Q}_m$ of cubes of generation $m$. There are $M^{2m}$ such cubes. For a given cube $Q(I_m) \in \mathcal{Q}_m$, the weight $\rho_m(I_m)$ has the form 
        \[\rho_m(I_m) = (M-3)^{a}r^b\]
        for all $x$ in the interior of $Q(I_m)$, where $a+b \leq m$. Observe that
        \[\sum_{k=1}^m J_k(i_k,j_k) = m-b.\]
        
        Thus, if $Q(I_m) \notin \widetilde{\mathcal{Q}}_m$, then $a \leq m-b \leq m/3$. This implies that $b \geq 2m/3$ and hence that $a \leq b/2$. Thus 
        \[\rho_m(I_m) \leq (M-3)^{b/2} \cdot r^b\leq (r^{-1})^{b/2} r^b= r^{b/2} \leq r^{m/3}.\] 

        By \Cref{lemm:diameter}, it follows that $\diam g(Q(I_m)) \leq C_1 r^{m/3}\cdot M^{-m} = C_1(r^{1/3}M^{-1})^m$. 
        Let $\varepsilon_m = C_1(r^{1/3}M^{-1})^m$. We have the upper bound
        \begin{align*}
          \mathcal{H}_{\varepsilon_m}^{1}(g(E_m)) & \leq \sum_{Q(I_m) \in \mathcal{Q}_m} \diam g(Q(I_m)) \\
          & \leq M^{2m} \cdot C_1 (r^{1/3} M^{-1})^m = C_1(r^{1/3} M)^m.
        \end{align*}
        By initially choosing $r$ to satisfy $r < M^{-3}$, we see that $r^{1/3}M <1$. Hence the right-hand side goes to $0$ as $m \to \infty$. Since $E \subset E_m$ for all $m$, this in turn implies that $\mathcal{H}^1(g(E))= 0$. 
    \end{proof}
    
        Note that the choice of dimension for the Hausdorff measure in \Cref{prop:hausdorff} is arbitrary; by adjusting the value of $r$, we get the same conclusion for the Hausdorff $D$-measure for any $D>0$.

    \section{Constructing the functions $f_n$} \label{sec:fn}

    The construction in this section depends on a parameter $n \in \mathbb{N}$ in addition to the $M \in \mathbb{N}$ from the previous section. For each $n \in \mathbb{N}$, we define a function $f_n \colon [0,1] \to [0,1]$ based on modifying a standard Bedford--McMullen carpet with uniform fibers. 

    For the general construction of Bedford--McMullen carpets with uniform fibers, we refer the reader to Sections 1.4 and 4.1 of \cite{BHL:23}, and to the original work of Bedford \cite{Bed:84} and McMullen \cite{McM:84}. To summarize briefly, a \textit{Bedford--McMullen carpet} is a set $A$ constructed in the following way. Choose values $M_2 \geq M_1$, and divide the unit square $Q_0 = [0,1]^2$ into an $M_1 \times M_2$ grid of subrectangles
    \[R(k,l) = \left\{(x,y): \frac{k-1}{M_1} \leq x \leq \frac{k}{M_1}, \frac{l-1}{M_2} \leq y \leq \frac{l}{M_2}\right\}. \]
    Choose one or more subcollections of  $\{R(k,l): 1 \leq k \leq M_1, 1 \leq l \leq M_2\}$, each of which is called a \textit{pattern}. Choose one of these as the initial pattern, and let $A_0$ denote the set of points belonging to the rectangles of this pattern. Define the set $A_1$ by replacing each rectangle comprising $A_0$ with a rescaled and stretched version of the rectangles of the same or a different pattern, according to a predetermined rule. Define the sequence $A_2, A_3, \ldots$ iteratively using the same procedure. Then take $A = \bigcap_{i=1}^\infty A_i$.

    A Bedford--McMullen carpet has \textit{uniform fibers} if each row of each pattern contains the same number of rectangles. While general Beford--McMullen carpets are difficult objects to analyze, the property of uniform fibers guarantees good behavior. In particular, the Hausdorff dimension of such a carpet is well-understood.

    In the remainder of this section, we will consider for each $n \in \mathbb{N}$ a specific Bedford--McMullen carpet that we now describe. Set $M_1 = M^n$ and $M_2 = M$. Divide $Q_0$ into the $M^n \times M$ grid of subrectangles $R(k_1,k_1)$ as above, where $k_1 \in \{1, \ldots, M^n\}$ and $l_1 \in \{1, \ldots, M\}$. Divide each rectangle $R(k_1,l_1)$ in like manner, obtaining rectangles
    $R(k_1, l_1, \ldots, k_m, l_m)$ of size $M^{-nm} \times M^{-m}$, where $k_1, \ldots, k_m \in \{1, \ldots, M^n\}$ and $l_1, \ldots, l_m \in \{1,\ldots, M\}$.



    Consider the sawtooth pattern that is schematically illustrated in \Cref{fig:pattern1}, representing a subset of the rectangles $R(k_1,l_1)$, $k_1,l_1 \in \{1, \ldots, M\}$. This pattern and its horizontal reflection are iterated in such a way that each stage of the construction forms a connected set. Since no two rectangles are contained in the same vertical cross-section, the limit set is the graph of a continuous function from $[0,1]$ to itself, which we denote by $h_n$. Observe that, for each set of indices $k_1,\ldots, k_m$, there is a unique multiindex $l_1,  \ldots, l_m$ such that $R(k_1,l_1, \ldots, k_m, l_m)$ belongs to the $m$-th level of the construction. Denote this rectangle by $S(k_1, \ldots, k_m)$ or by $S(K_m)$, where $K_m$ denotes the multiindex $(k_1, \ldots, k_m)$. Denote by $\mathcal{K}_m$ the set of multiindexes $K_m$ and by $\mathcal{S}_m$ the collection of rectangles $S(K_m)$.

                    \begin{figure} 
    \centering
    \hfill
    \subfloat[Schematic of the pattern used to define $h_n$ ($M=5$ and $n=2$)]{
        \begin{tikzpicture}[scale=1.2]
        \draw[xstep=.2,ystep=1.0,gray,thin] (0,0) grid (5,5);
        \foreach \x in {0,...,4}
         {\filldraw[fill=gray] (.2*\x,\x) rectangle (.2*\x+.2,\x+1);
         \filldraw[fill=gray] (.2*\x+1,5-\x) rectangle (.2*\x+1+.2,4-\x);
         \filldraw[fill=gray] (.2*\x+2,\x) rectangle (.2*\x+2+.2,\x+1);
         \filldraw[fill=gray] (.2*\x+3,5-\x) rectangle (.2*\x+3+.2,4-\x);
         \filldraw[fill=gray] (.2*\x+4,\x) rectangle (.2*\x+4+.2,\x+1);}
        \end{tikzpicture}
    \label{fig:pattern1}}
    \hfill
    \subfloat[Schematic of the pattern used after the stopping condition ($M=5$)]{
        \begin{tikzpicture}[scale=1.2]
        \draw[xstep=1,ystep=1.0,gray,thin] (0,0) grid (5,5);
        \foreach \x in {0,...,4}
         {\filldraw[fill=gray] (\x,\x) rectangle (\x+1,\x+1);}
        \end{tikzpicture}
    \label{fig:pattern2}}
    \hfill \hfill
    \caption{}
    \label{fig:patterns}
    \end{figure}

    Observe that the pattern we use has the property of uniform fibers: each horizontal cross-section of the pattern contains $M^{n-1}$ rectangles. As noted, this property makes the Hausdorff dimension and corresponding Hausdorff measure well-behaved. The following fact is due to McMullen \cite{McM:84}. See also Proposition 4.2 in \cite{BHL:23} for a proof. 

    \begin{lemm} \label{lemm:HausdorffDim}
        The Hausdorff dimension of $\Gamma(h_n)$ is $D_n = 2-1/n$. Moreover, $0 < \mathcal{H}^{D_n}(\Gamma(h_n)) < \infty$. 
    \end{lemm}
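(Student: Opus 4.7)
My plan is to deduce \Cref{lemm:HausdorffDim} from McMullen's general formula for Bedford--McMullen carpets with uniform fibers. The construction of $h_n$ matches this general framework with horizontal subdivisions $M_1 = M^n$ and vertical subdivisions $M_2 = M$, and with exactly $M^{n-1}$ rectangles chosen per row (uniform fibers). McMullen's formula then predicts
\[
\dim_H \Gamma(h_n) = \log_{M_2}\bigl(\textstyle\sum_j N_j^{\log M_2/\log M_1}\bigr) = \log_M\bigl(M \cdot (M^{n-1})^{1/n}\bigr) = \frac{2n-1}{n} = 2-\tfrac{1}{n},
\]
together with $0 < \mathcal{H}^{D_n}(\Gamma(h_n)) < \infty$. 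So my plan is to plug into McMullen's theorem, and to sketch the two directions of the measure bound with enough detail to be self-contained.

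For the upper bound, I would cover $\Gamma(h_n)$ at the fine scale $M^{-nm}$. After $m$ generations, the graph is contained in $\bigcup_{K_m \in \mathcal{K}_m} S(K_m)$, and $|\mathcal{S}_m| = (M^n)^m = M^{nm}$ since every row contributes $M^{n-1}$ rectangles at each iteration. Each $S(K_m)$ has dimensions $M^{-nm} \times M^{-m}$, so it is covered by at most $\lceil M^{-m}/M^{-nm}\rceil \leq 2 M^{(n-1)m}$ squares of side $M^{-nm}$. The total number of such squares is $\lesssim M^{(2n-1)m}$, and the corresponding $D_n$-content estimate is $M^{(2n-1)m}(M^{-nm})^{D_n} = 1$, giving $\mathcal{H}^{D_n}(\Gamma(h_n)) < \infty$.

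For the lower bound I would use the natural Bernoulli measure $\mu$, defined so that $\mu(S(K_m)) = M^{-nm}$ for every $K_m$, and verify the mass bound $\mu(B(x,r)) \leq C\, r^{D_n}$ uniformly in $x$ and $r$; the mass distribution principle then forces $\mathcal{H}^{D_n}(\Gamma(h_n)) > 0$. The verification of this mass bound is the only real obstacle: the non-conformality of the iterated function system means that one must treat two different scales simultaneously. Given an $r \in (0,1)$, one chooses integers $m_1 \leq m_2$ with $M^{-m_1} \approx r$ and $M^{-nm_2} \approx r$, and counts level-$m_2$ rectangles meeting $B(x,r)$ by first restricting to the $\lesssim 1$ horizontal strips at height $M^{-m_1}$ that $B(x,r)$ hits, and then using the uniform-fiber hypothesis (exactly $M^{n-1}$ rectangles per row at each stage) to bound the number of rectangles within each strip at the finer scale. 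This is precisely McMullen's original argument \cite{McM:84}, worked out in detail in the uniform-fibers setting in \cite[Proposition 4.2]{BHL:23}, and rather than reproduce it I would simply cite those sources.
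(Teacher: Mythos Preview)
Your proposal is correct and matches the paper's own treatment: the paper does not give a proof of \Cref{lemm:HausdorffDim} at all, but simply attributes it to McMullen \cite{McM:84} and refers the reader to \cite[Proposition~4.2]{BHL:23} for details. Your write-up actually goes further than the paper by plugging the parameters $M_1=M^n$, $M_2=M$, $N_j=M^{n-1}$ into McMullen's formula and sketching the two halves of the $\mathcal{H}^{D_n}$ estimate, so there is nothing to add.
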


    Our next objective is to modify the function $h_n$ by applying the same stopping condition that we used when defining $g$. Note that each rectangle $S(K_m)$ is contained in some rectangle $Q(I_{m})$ of level $m$. Similarly to \Cref{sec:g0}, for each $1 \leq k \leq m$ we set $J_k(K_k) = 1$ if $(i_k,j_k) \in \mathcal{I}_1 \cup \mathcal{I}_2$ and $J_k(k_k) = 0$ if $(i_k,j_k) \in \mathcal{I}_3$. The analogous stopping condition is the following: If $\sum_{k=1}^m J_k(i_k,j_k) > m/3$ for some $m \in \mathbb{N}$, then replace the sawtooth pattern by the simple linear pattern shown in \Cref{fig:pattern2} for all future levels of the construction. Denote the resulting function by $f_n$. 

    Let $\widetilde{\mathcal{S}}_m$ denote the subset of rectangles $S(K_m)$ of level $m$ that satisfies (or one of its parent rectangles satisfies) the stopping condition. Let $B_m$ be the closure of the set $[0,1] \setminus \left( \pi_1 \left( \bigcup \widetilde{\mathcal{S}}_m \right)\right)$. Here, $\pi_1$ denotes projection onto the first coordinate. In words, $B_m$ is the set of points in $[0,1]$ that have not been affected by the stopping condition after $m$ levels. Let $B = \bigcap_{m=1}^\infty B_m$. Observe that $[0,1] \setminus B$ is an open set and that, by construction, the function $f_n$ is locally rectifiable on this set. 

    We now arrive at the most important step of the argument towards proving \Cref{thm:main_a}. This proposition is where we use the requirement that $M \geq 78$. 

    \begin{prop} \label{prop:f_n}
        The set $\Gamma(f_n)$ satisfies $\mathcal{H}^{D_n}(\Gamma(f_n)) \geq \mathcal{H}^{D_n}(\Gamma(h_n))/2>0$.
    \end{prop}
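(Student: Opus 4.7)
For each $x\in B$, the stopping condition is never triggered along the trajectory of $x$, so $f_n(x)=h_n(x)$. Hence $\Gamma(h_n)\cap(B\times[0,1])\subseteq\Gamma(f_n)$, and it suffices to prove that the bad set $\Gamma(h_n)\cap(([0,1]\setminus B)\times[0,1])$ has Hausdorff $D_n$-measure at most $\mathcal{H}^{D_n}(\Gamma(h_n))/2$.

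The first ingredient is the equal-share identity
\[\mathcal{H}^{D_n}(\Gamma(h_n)\cap S(K_m))=M^{-nm}\,\mathcal{H}^{D_n}(\Gamma(h_n))\]
for each level-$m$ cylinder. This follows from the self-similarity of the carpet together with uniform fibers: all affine maps $Q_0\to S(K_m)$ at a fixed level $m$ share the same linear part up to reflection, so translation/reflection invariance of $\mathcal{H}^{D_n}$ combined with \Cref{lemm:HausdorffDim} forces the common value. Decomposing the bad set along the disjoint family $\{Q(I_m):m\ge 1,\, I_m\in T_m\}$ of first-stopping squares, and letting $N(I_m)$ count the level-$m$ cylinders contained in $Q(I_m)$, one finds
\[\frac{\mathcal{H}^{D_n}(\mathrm{bad})}{\mathcal{H}^{D_n}(\Gamma(h_n))}\;=\;\sum_{m=1}^\infty\sum_{I_m\in T_m}N(I_m)\,M^{-nm},\]
which equals the probability, under the uniform measure $\mu$ on cylinder multiindex sequences, that the induced $Q$-trajectory ever satisfies $\sum_{k=1}^m J_k>m/3$. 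Uniform fibers ensure that each $J_k$ has Bernoulli$(p)$ marginal distribution with $p=(8M-16)/M^2$.

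The main obstacle is bounding this probability by $1/2$; a union bound with Chernoff estimates is too weak. My approach is an optional-stopping argument: set $W_m=\sum_{k=1}^m(3J_k-1)\in\mathbb{Z}$, so that stopping corresponds to $W_m\ge 1$ for some $m$, and choose the positive root $r>1$ of $pr^2+pr=1-p$. Then $r^{W_m}$ is a nonnegative martingale, and optional stopping at the first hitting time $\tau$ of $\{W\ge 1\}$ yields $\mu(\tau<\infty)\le 1/r$. A direct computation shows $r\ge 2$ exactly when $p\le 1/7$, which holds for $M\ge 54$ and a fortiori for $M\ge 78$. The delicate step is verifying the martingale identity in the presence of any correlations between successive $J_k$ inherited from the sawtooth structure; these can be controlled using the reflection symmetries built into the iterated pattern.
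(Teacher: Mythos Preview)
Your overall strategy---reduce to the measure of the ``bad'' set, use the equal-share identity, and interpret the stopping time probabilistically---matches the paper's. The gap is in the last paragraph, and it is not a technicality that reflection symmetries can patch.

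Concretely: for $m\ge 2$ (and $n\ge 2$) the parent rectangle $S(K_{m-1})$ has width $M^{-n(m-1)}\le M^{-m}$, so it lies inside a \emph{single} column of the $M\times M$ subdivision of $Q(I_{m-1})$. Hence the horizontal index $i_m$ is completely determined by $K_{m-1}$, while the vertical index $j_m=l_m$ is uniform in $\{1,\dots,M\}$ as $k_m$ varies (by uniform fibers). It follows that
\[
\mathbb{P}\bigl(J_m=1\,\big|\,\mathcal{F}_{m-1}\bigr)=
\begin{cases}
1,& i_m\in\{1,2,M-1,M\},\\
4/M,& \text{otherwise.}
\end{cases}
\]
On the histories with $i_m\in\{1,2,M-1,M\}$ one has $\mathbb{E}[r^{\,3J_m-1}\mid\mathcal{F}_{m-1}]=r^{2}>1$, so $r^{W_m}$ is neither a martingale nor a supermartingale, and the optional-stopping bound $\mu(\tau<\infty)\le 1/r$ does not follow. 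The marginal value $p=(8M-16)/M^2$ is irrelevant here; what the exponential-martingale method needs is a \emph{uniform} upper bound $\mathbb{P}(J_m=1\mid\mathcal{F}_{m-1})\le p^\ast$ with $p^\ast(r^2+r+1)\le 1$, and no such bound exists.

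The paper confronts exactly this obstruction and resolves it differently: it abandons $J_m$ and instead tracks two coordinate-wise indicators $X_m,Y_m$ (for the horizontal and vertical projections separately), each of which \emph{is} an i.i.d.\ Bernoulli sequence with success probability $4/M$. A barrier argument for each of the two genuine random walks $Z_N=\sum X_m$ and $W_N=\sum Y_m$ shows that each stays below the line of slope $-3/5$ with probability at least $3/4$ when $M\ge 78$; hence with probability at least $1/2$ both do, and on that event $\sum_{k\le m}J_k\le m/3$ for all $m$. Your martingale idea would work cleanly if applied to $X_m$ (or $Y_m$) individually, but not to $J_m$ directly.
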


    The remainder of this section is dedicated to proving \Cref{prop:f_n}. This is done by showing that the set on which $\Gamma(f_n)$ and $\Gamma(h_n)$ agree is a set of positive Hausdorff $D_n$-measure. 
    Observe that $h_n(x) = f_n(x)$ for all $x \in B$. Since $h_n(B) \subset \Gamma(f_n)$, it suffices to show that $h_n(B)$ has Hausdorff dimension $D_n$ as well. Observe that, by self-similarity, for each multiindex $K_m$, the set $S(K_m)$ satisfies
    \[\mathcal{H}^{D_n}(\Gamma(h_n) \cap S(K_m)) = \frac{\mathcal{H}^{D_n}(\Gamma(h_n))}{M^{nm}}.\]
    

    For each $m \in \mathbb{N}$, let $G_m$ be the set 
    \[G_m = \bigcup_{i=1}^{M^m} [(i-1+\frac{2}{M})M^{-m},(i-\frac{2}{M})M^{-m}].\] The set $G_m$ is the projection onto each coordinate axis of the union of all center regions of squares in $\mathcal{Q}_m$ (see \Cref{fig:qs}).  
    
    It is convenient to frame the following argument from a probabilistic point of view. Roughly speaking, we interpret the stopping condition as a random walk with barrier and show that with probability at least one-half the barrier is never crossed. We consider the Hausdorff $D_n$-measure on $\Gamma(h_n)$, normalized so that $\Gamma(h_n)$ has measure one, as the underlying probability measure. 
    The argument here is based on that found in Section 6 of \cite{Rom:19b}. Note, however, that a direct adaption of this argument would use the functions $J_k$ as random variables on $\Gamma(h_n)$; however, one encounters a difficulty in our situation because the random variables $J_k$ are not mutually independent and so cannot be used to define the random walk.

    To remedy this difficulty, we consider the two coordinate directions separately. We define random variables $X_m$ and $Y_m$ on the probability space $\Gamma(h_n)$ as follows. Let $\pi_1$ denote projection onto the first coordinate and $\pi_2$ denote projection onto the second coordinate. For a given $x \in \Gamma(h_n)$, there exists a corresponding $K_{m+1} \in \mathcal{K}_{m+1}$ such that $x \in S(K_{m+1})$. Note that finitely many $x$ belong to two such sets, but this does not affect the analysis here.
    
    
    We set $X_m(x) = -1$ if $\pi_1(S(K_{m+1})) \subset G_m$ and $X_m(x) = 1$ otherwise. Likewise, we set $Y_m(x) = -1$ if $\pi_2(S(K_{m+1})) \subset G_m$ and $Y_m(x) = 1$ otherwise. It follows from the construction of $h_n$ that $\mathbb{P}(X_m=-1) = \mathbb{P}(Y_m=-1) = (4-M)/M$ and $\mathbb{P}(X_m=1) = \mathbb{P}(Y_m=1) = 4/M$ for all $m \in \mathbb{N}$. In particular, all these random variables are identically distributed.

    Moreoever, the random variables $X_1, X_2, \ldots$ are mutually independent, as are the variables $Y_1, Y_2, \ldots$. To see this, we observe that for each interval $I_i^m=[(i-1)2^{-m}, i2^{-m}]$, the preimages $\pi_1^{-1}(I_i^m)$ each contain the same number of rectangles from $\mathcal{S}_{m+2}$ lying in $\pi_1^{-1}(G_{m+1})$ and lying outside $\pi_1^{-1}(G_{m+1})$. Likewise, the same is true for the preimages $\pi_2^{-1}(I_i^m)$. 
    

    We consider the partial sums $Z_{N} = \sum_{m=1}^{N} X_m$ as defining a biased random walk that steps down with probability $\mathbb{P}(X_m=-1) = (M-4)/M$ and steps up with probability $\mathbb{P}(X_m=-1) = 4/M$. Let $\Omega$ be the region defined by the condition $Z_N \leq -(3/5)N$. See \Cref{fig:rw}, where the region $\Omega$ is colored blue. Note that if a random walk is contained in $\Omega$, then it steps down at least four times for every time it steps up.  We claim that, given our requirement that $M \geq 78$, with large probability the random walk never leaves the region $\Omega$. 

    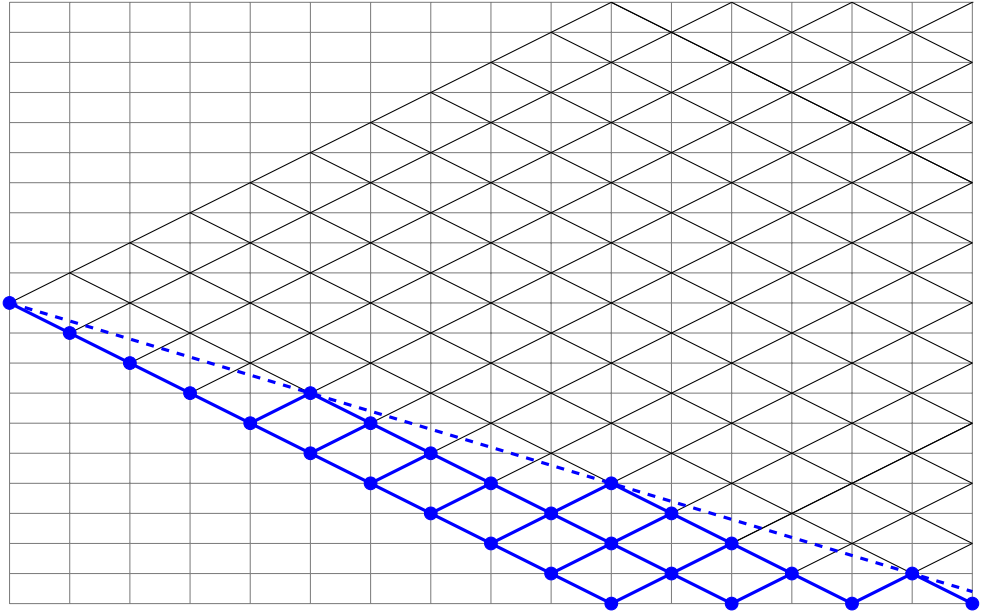
\begin{figure} 
    \centering
        \begin{tikzpicture}[xscale=.8, yscale=.4]
        \draw[xstep=1,ystep=1.0,gray,thin] (0,-10) grid (16,10);
        \foreach \x in {0,...,3}
         {\draw[] (\x,-\x) -- (2*\x+10,10);
         \draw[]  (\x,\x) -- (2*\x+10,-10);
         \draw[] (16-2*\x,10) -- (16,10-2*\x);
         \draw[] (16-2*\x,-10) -- (16,-10+2*\x);}
         \foreach \x in {4,...,10}
         {\draw[] (\x,-\x) -- (16,-2*\x+16);
         \draw[]  (\x,\x) -- (16,2*\x-16);}
         \draw[very thick,blue] (0,0) -- (10,-10);
         \draw[very thick,blue] (4,-4) -- (5,-3) -- (12,-10);\draw[very thick,blue] (5,-5) -- (6,-4);
         \draw[very thick,blue] (6,-6) -- (7,-5);
         \draw[very thick,blue] (7,-7) -- (8,-6);
         \draw[very thick,blue] (8,-8) -- (10,-6) -- (14,-10) -- (15,-9) -- (16,-10);
         \draw[very thick,blue] (9,-9) -- (11,-7);
         \draw[very thick,blue] (10,-10) -- (12,-8);
         \draw[very thick,blue] (12,-10) -- (13,-9);
         \draw[very thick, blue, dashed] (0,0) -- (16,-16*3/5);
         \foreach \x in {0,...,10}
         {\filldraw[blue] (\x,-\x) ellipse (3pt and 6pt);}
        \foreach \x in {5,...,12}
         {\filldraw[blue] (\x,-\x+2) ellipse (3pt and 6pt);}
         \foreach \x in {10,...,14}
         {\filldraw[blue] (\x,-\x+4) ellipse (3pt and 6pt);}
         \filldraw[blue] (15,-9) ellipse (3pt and 6pt);
         \filldraw[blue] (16,-10) ellipse (3pt and 6pt);
    \end{tikzpicture}
        \caption{The random walk and good region $\Omega$ (colored blue).}
    \label{fig:rw}
    \end{figure}

    \begin{lemm}
    Let $p = (M-4)/M$. The probability that the random walk $Z_n$ leaves the region $\Omega$ is at most $(1-p^4)/p^5$.    
    \end{lemm}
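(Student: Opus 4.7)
The plan is to combine a direct enumeration over the first five steps with a strong Markov argument. Checking $Z_k \le -3k/5$ at each intermediate step, the only length-$5$ sign patterns that keep the walk in $\Omega$ throughout are $DDDDD$ (ending at $Z_5 = -5$) and $DDDDU$ (ending at $Z_5 = -3$), with probabilities $p^5$ and $p^4 q$, so the first-5-step survival probability equals $p^4$. Applying the strong Markov property at time $5$ and using translation invariance of the walk and of the linear boundary, let $F$ denote the probability of leaving $\Omega$ starting from $Z_0 = 0$ and $G$ the analogous probability starting from $Z_0 = -2$. One obtains the renewal equation
\[ F \;=\; (1 - p^4) + p^4 q \cdot F + p^5 \cdot G, \]
which, using $1 - p^4 q = 1 - p^4 + p^5$, rearranges to $(1-p^4+p^5)F = (1-p^4) + p^5 G$.

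The crux is to bound $G$. It is convenient to work with the shifted walk $W_N := -Z_N - 3N/5$, which satisfies $W_N \ge 0$ iff the walk is in $\Omega$ and has steps $+2/5$ (with probability $p$) and $-8/5$ (with probability $q$). Writing $\phi(w)$ for the probability that $W_N < 0$ at some time starting from $W_0 = w$, we have $\phi(0) = F$ and $\phi(2) = G$. Because $W$ can decrease only in jumps of $8/5$, any trajectory from $W_0 = 2$ to $\{W < 0\}$ must first enter the danger strip $L := [0, 8/5)$, which contains the lattice sites $\{0, 2/5, 4/5, 6/5\}$. Applying the strong Markov property at the first hitting time $T_L$ of $L$ and using the monotonicity bound $\phi(l) \le \phi(0) = F$ for $l \in L$ yields
\[ G \;\le\; \mathbb{P}_{W_0 = 2}(T_L < \infty) \cdot F. \]
Shifting the walk by $-8/5$ identifies the event $\{T_L < \infty\}$ under $\mathbb{P}_{W_0 = 2}$ with the exit event under $\mathbb{P}_{W_0 = 2/5}$, so $\mathbb{P}_{W_0 = 2}(T_L < \infty) = \phi(2/5) \le F$ by monotonicity, giving $G \le F^2$.

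Substituting $G \le F^2$ into the renewal equation produces the quadratic inequality
\[ p^5 F^2 - (1 - p^4 + p^5)F + (1 - p^4) \;\ge\; 0, \]
which factors directly as $p^5 (F - 1)\bigl(F - (1-p^4)/p^5\bigr) \ge 0$. The hypothesis $M \ge 78$ ensures $p > 4/5$, so the walk $W_N$ has strictly positive drift $(2p - 8q)/5 > 0$ and hence $F < 1$; the factored inequality then forces $F \le (1-p^4)/p^5$, as claimed. The main obstacle is the bound on $G$: the naive monotonicity $G \le F$ would close the recursion only trivially to $F \le 1$, so one must exploit the observation that a walk starting two units below the boundary must first retrace its buffer before any exit is possible, which produces the quadratic improvement $G \le F^2$.
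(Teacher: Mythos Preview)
Your proof is correct and follows essentially the same route as the paper: enumerate the first five steps to get survival probability $p^4$, apply the Markov property at time $5$, bound the exit probability from the ``two-below-boundary'' state by $F^2$, and solve the resulting quadratic using transience to discard the root $F=1$. The only difference is presentational: where the paper argues geometrically by translating $\Omega$ and saying that upon leaving the translate the walk is back on the boundary of $\Omega$, you make this rigorous via the drift-compensated process $W_N=-Z_N-3N/5$ and the first hitting time of the strip $[0,8/5)$, which cleanly justifies the monotonicity step the paper leaves implicit.
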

    \begin{proof}
        Let $r$ denote the probability that the random walk $Z_n$ leaves in the region $\Omega$. We can bound the value of $r$ as follows. This is a variation on a standard argument for biased random walks with barriers; see, for example, \cite[Section 17.5]{kle:13}.

        The random walk leaves $\Omega$ on the $n$-th step, $n\in \{1,2,3,4\}$, with probability $p^{n-1}(1-p)$. On the fifth step, the random walk may step up, at which point we have returned to the initial configuration. In this case, the probability of leaving $\Omega$ is again $r$. In the other case, if the random walk steps down at step $5$, we may consider the region $\widetilde{\Omega}$ that is the translate of $\Omega$ to the new start point. If the random walk leaves $\widetilde{\Omega}$ (which occurs with probability $r$), then it is again on the boundary of $\Omega$, and the probability of leaving $\Omega$ is at most $r$. This gives the inequality
        \[r \leq (1-p) + p(1-p) + p^2(1-p) + p^3(1-p) + p^4(1-p)r + p^5r^2.\]
        Algebra shows that $r$ must satisfy $r \leq (1-p^4)/p^5$ or $r \geq 1$. However, the case $r \geq 1$ is not possible by the transience of biased random walks. Thus $r$ is at most $(1-p^4)/p^5$. 
    \end{proof}
    Observe that $(1-p^4)/p^5$ goes to zero as $M \to \infty$. More particularly, a calculuation shows that if $M \geq 78$ then ${(1-p^4)/p^5 <.25}$. That is, if $M \geq 45$, then with probability more than $.25$ the random walk satisfies $Z_n \leq -(3/5)n$ for all $n \in \mathbb{N}$. Likewise, with probability more than $.25$ the random walk $W_n = \sum_{n=1}^m Y_m$ satisfies $W_n \leq -(3/5)n$. Thus, with probability more than $.5$ both of these statements are satisfied.

    Next, we observe that for $J_m(I_m)=1$ to hold, then either $X_m(I_m) = 1$ or $Y_m(I_m)=1$ (or both) must hold. If $Z_n$ is contained in $\Omega$, it must step down at least four times for each step up, and likewise for $W_n$. For random walks $Z_n$ and $W_n$ contained in $\Omega$, the two walks together cannot step up more than twice for each four steps down. Thus, the corresponding $J_m$ must satisfy $\sum_{k=1}^m J_k(I_k) \leq m/3$ for all $m$. We conclude that with probability at least $.5$ the stopping condition never occurs. Translating this back in terms of the set $\Gamma(h_n)$, we see that $h_n(B)$, and hence $f_n(B)$, has Hausdorff $D_n$-measure at least $\mathcal{H}^{D_n}(\Gamma(h_n))/2$. This completes the proof of \Cref{prop:f_n}.

    \section{Proof of the main theorems} \label{sec:proof}

    \subsection{Proof of \Cref{thm:main_a}}

    We let $\alpha \in (1,2)$ and pick $n \in \mathbb{N}$ so that $D_n = 2-1/n > \alpha$. Take $f$ to be the function $f_n$, $g$ to be the restriction of $g_0$ to $\Gamma(f_n)$, and $Y = g(\Gamma(f_n))$. From \Cref{prop:f_n}, we see that $\Gamma(f_n)$ has Hausdorff dimension $D_n>\alpha$. From \Cref{prop:hausdorff}, it follows that $g(\Gamma(f) \cap E)$ has Hausdorff dimension at most $1$. Since $\Gamma(f)$ is locally rectifiable outside the closed set $E$ and $g$ is Lipschitz, it also follows that $g(\Gamma(f) \setminus E)$ has Hausdorff dimension at most $1$. The finally claim to verify is the statement about $\dim_H (\Gamma(f_n) \cap (\mathbb{R} \times \{a\}))$. 

    Observe first that the original map $h_n$ satisfies $\dim_H (\Gamma(h_n) \cap (\mathbb{R} \times \{a\})) = D_n-1$ for almost every $a \in [0,1]$; see Proposition 4.2 in \cite{BHL:23} for a proof. In fact, for almost every $a \in [0,1]$ the set $\Gamma(h_n) \cap (\mathbb{R} \times \{a\})$ has positive and finite Hausdorff $(D_n-1)$-measure. Let $A'$ denote the set of such points $a$.

    Recall that $\mathcal{S}_m$ is the collection of rectangles of level $m$ used to define $\Gamma(h_n)$. Let $\mathcal{T}_m$ be the subcollection of $\mathcal{S}_m$ consisting of those rectangles not affected by the stopping condition. Observe that $|\mathcal{T}_m|$, the cardinality of $\mathcal{T}_m$, is at least half $|\mathcal{S}_m|$ for all $m$. 

    Next, for all $j \in \{1, \ldots, M^m\}$, let $\mathcal{S}_m^j$ denote the subcollection of $\mathcal{S}_m$ of rectangles contained in the strip $\mathbb{R} \times [(j-1)M^{-m}, j M^{-m}]$. Note that $|\mathcal{S}_m| = M^{mn}$ and $|\mathcal{S}_m^j| = M^{mn}/M^m = M^{m(n-1)}$. Define the collections $\mathcal{T}_m^j$ similarly. Then
    \[M^{mn} = |\mathcal{S}_m| \leq 2 |\mathcal{T}_m| = 2 \sum_{j=1}^{M^m}|\mathcal{T}_m^j|.\]
    Since $|\mathcal{T}_m^j| \leq |\mathcal{S}_m^j| = M^{m(n-1)}$ for all $j$, it follows that 
    \[|\mathcal{T}_m^j| \geq 4^{-1} |\mathcal{S}_m^j| = 4^{-1} M^{m(n-1)}\]
    for at least $M^m/4$ of the indices $j \in \{1, \ldots, M^m\}$. Otherwise, we would obtain the contradiction 
    \[\sum_{j=1}^{M^m} |\mathcal{T}_m^j| \leq \frac{3M^m}{4} \cdot \frac{M^{m(n-1)}}{4}  + \frac{M^m}{4} \cdot M^{m(n-1)} = \frac{7M^{mn}}{16}.\]
    Let $A_m$ denote the union of those intervals $[(j-1)M^{-m}, jM^{-m}]$ for which $|\mathcal{T}_m^j| \geq 4^{-1}M^{m(n-1)}$. Then the Lebesgue measure of $A_m$ is at least $1/4$. Moreover, we observe that $A_{m+1} \subset A_m$ for all $m \in \mathbb{N}$. 
    Let $A$ denote the set $\bigcap_{m=1}^\infty A_m$ with all points of the form $jM^{-m}$ removed. It is immediate from the outer regularity of Lebesgue measure that $A$ has Lebesgue measure at least $1/4$. 
    
    We claim that for each $a \in A \cap A'$, the set $\Gamma(f_n) \cap (\mathbb{R} \times \{a\})$ has positive Hausdorff $(D_n-1)$-measure. 
    Note that, unlike in the proof of \Cref{prop:f_n}, we cannot rely on self-similarity to prove this claim. Instead, we apply the standard Frostman Lemma (see for example Chapter 8 in \cite{Mat:95}) as follows. Let $\Gamma_a = \Gamma(f_n) \cap (\mathbb{R} \times \{a\})$ and let $\widetilde{\Gamma}_a = \Gamma(h_n) \cap (\mathbb{R} \times \{a\})$. Define a Borel regular measure $\mu$ on $\widetilde{\Gamma}_a$ in the standard way by redistributing a unit mass equally on the rectangles in $\mathcal{S}_m^{j_m}$, where $j_m$ is such that $a \in [(j_m-1)M^{m}, j_mM^{m}]$. More precisely, we specify $\mu$ by declaring that $\mu(S \cap \widetilde{\Gamma}_a) = M^{-m(n-1)}$ for each $S \in S_m^{j_m}$. Note that each rectangle in $\mathcal{S}_m^{j_m}$ contains exactly $M^{n-1}$ rectangles in $\mathcal{S}_{m+1}^{j_{m+1}}$ that intersect $\widetilde{\Gamma}_a$, which justifies that $\mu$ is well-defined as a measure. One can check that the measure $\mu$ satisfies $\mu(B(x,r)) \leq r^{D_n-1}$ for all $x \in \mathbb{R}^2$ and $r>0$. 
    
    The restriction of $\mu$ to $\Gamma_a$, which we denote by $\sigma$, also satisfies $\sigma(B(x,r)) \leq r^{D_n-1}$ for all $x \in \mathbb{R}^2$ and $r>0$. Now $\Gamma_a = \bigcap_{m=1}^\infty \mathcal{T}_m^{j_m}$. Since $|\mathcal{T}_m^{j_m}| \geq 4^{-1}|\mathcal{S}_m^{j_m}|$, it follows from the outer regularity of $\mu$ that $\sigma(\Gamma_a) \geq 4^{-1}\mu(\widetilde{\Gamma}_a)>0$.  
    By the Frostman Lemma, these properties suffice to show that $\Gamma_a$ satisfies $\mathcal{H}^{D_n-1}(\Gamma_a)>0$. Since $\Gamma_a \subset \widetilde{\Gamma}_a$, it follows that $\Gamma_a$ has Hausdorff dimension exactly $D_n-1$, with positive Hausdorff $(D_n-1)$-measure. 
    
    \subsection{Proof of \Cref{thm:main}} We are now ready to define the function $f$ in \Cref{thm:main}. This is done by pasting together rescaled versions of each $f_n$ on dyadic intervals. On each interval $[2^{-n}, 2^{-n+1}]$, define $f$ by
    \[f(x) = 2^{-n} + 2^{-n}f_n(2^n(x-2^{-n})).\]
    We also set $f(0) = 0$. Observe that $f(2^{-n}) = 2^{-n}$ and in particular the function $f(x)$ is well defined and continuous. 

    For each $n \in \mathbb{N}$, the set $f([2^{-n}, 2^{-n+1}])$ has Hausdorff dimension $2-1/n$ by \Cref{lemm:HausdorffDim}. Thus $\Gamma(f)$ has Hausdorff dimension $2$.  

    We now define the metric space $Y$ as follows. Let $X_n$ denote a copy of the space $2^{-n}X$. Take $\widetilde{Y}$ to be the metric completion of the gluing of the spaces $X_n$ over all $n \in \mathbb{N}$ in which the bottom left corner of $X_n$ is identified with the top right corner of $X_{n+1}$. We give $\widetilde{Y}$ the usual length metric induced by the gluing. As a set, we identify $\widetilde{Y}$ with the closure of the set $\bigcup_{n=1}^\infty [2^{-n}, 2^{-n+1}] \times [2^{-n}, 2^{-n+1}]$ in $\mathbb{R}^2$. With this identification, we define $g$ to be the restriction to $\Gamma(f)$ of the identity map from $\bigcup_{n=1}^\infty [2^{-n}, 2^{-n+1}] \times [2^{-n}, 2^{-n+1}]$ to $Y$. Let $E_n = 2^{-n}E + (2^{-n},2^{-n})$. Let $\widetilde{E} = \bigcup_{n=1}^\infty E_n \subset [0,1]$. Let $Y = g(\Gamma(f))$. 

    

    Let $\widetilde{B}_n = 2^{-n}B+2^{-n}$. Then $\widetilde{B} = \bigcup_{n=1}^\infty \widetilde{B}_n$ is the set of points $x$ whose image $f(x)$ is not affected by the stopping condition. Observe that $[0,1] \setminus \widetilde{B}$ is a countable union of open intervals. On each such interval $f$ is Lipschitz, and in particular $f([0,1] \setminus \widetilde{B})$ is locally rectifiable. By construction, $f(\widetilde{B})$ is contained in $\widetilde{E}$, whose image in $Y$ has Hausdorff dimension at most $1$. It follows that $\Gamma(f)$ is the union of two subsets of Hausdorff dimension at most $1$ and thus has Hausdorff dimension $1$. This concludes the proof of \Cref{thm:main}. 
	
	\bibliographystyle{abbrv}  
	\bibliography{biblio}

\end{document}